\theoremstyle{plain}
\newtheorem{theorem}{Theorem}[section]
\newtheorem*{Theorem B}{Theorem B}
\newtheorem*{Theorem A}{Theorem A}
\newtheorem{lemma}{Lemma}[section]
\newtheorem{corollary}{Corollary}[section]
\numberwithin{equation}{section}
\theoremstyle{remark}
\def\<{\langle }
\def\>{\rangle}
\def\({\left ( }
\def\){\right )}
\def\e{\eqref}
\title[Existence and uniqueness theorems]{Existence and uniqueness theorems for pointwise slant immersions in complex space forms}
\subjclass[2010]{53C40, 53C42, 53B25}
\keywords{Slant submanifold, pointwise-slant submanifold, Kaehler manifold, complex space form}
\author[A. Alghanemi]{Azeb Alghanemi}  
\address{A. Alghanemi: Department of Mathematics, Faculty of Science, King Abdulaziz University, 21589 Jeddah, Saudi Arabia}
\email{aalghanemi@kau.edu.sa}
\author[N. M. Al-houiti]{Noura M. Al-houiti}
\address{N. M. Al-houiti: Department of Mathematics, Faculty of Science, King Abdulaziz University, 21589 Jeddah, Saudi Arabia}
\email{nalhouiti@ut.edu.sa}
\author[B.-Y. Chen]{Bang-Yen Chen}\address{B.-Y. Chen: Department of Mathematics, Michigan State University, 619 Red Cedar Road,   East Lansing, Michigan 48824--1027, U.S.A.}
\email{chenb@msu.edu}
\author[S. Uddin]{Siraj Uddin}
\address{S. Uddin: Department of Mathematics, Faculty of Science, King Abdulaziz University, 21589 Jeddah, Saudi Arabia}
\email{siraj.ch@gmail.com}
\begin{document}
\begin{abstract} An isometric immersion $f: M^{n} \rightarrow \tilde M^{m}$ from an $n$-dimensional Riemannian manifold $M^{n}$ into an almost Hermitian manifold $\tilde M^{m}$ of complex dimension $m$ is called {\it pointwise slant} if its Wirtinger angles define a function defined on $M$. In this paper we establish 
the existence and uniqueness theorems for pointwise slant immersions of Riemannian manifolds $M^{n}$ into a complex space form $\tilde M^{n}(c)$ of constant holomorphic sectional curvature $c$. 
\end{abstract}
\maketitle

\sloppy

\section{Introduction}
The class of slant submanifolds was initiated by B.-Y. Chen in \cite{C2} is an important class of submanifolds of almost Hermitian manifolds, which include almost complex and totally real submanifolds as special cases. In fact, Chen introduced a slant submanifold  $M$ of  an almost Hermitian manifold $(\tilde M, g, J)$ as a submanifold whose Wirtinger angle $\theta (X)$ between $JX $ and the tangent space $T_p M$ $(p\in M)$ is global constant, i.e, $\theta (X)$ is independent of the choice of  the nonzero vector $X \in T_p M$ and also independent of the choice of the point $p\in M$. Further,  Chen  and L. Vrancken established in  \cite{CV,CV2} the Existence and Uniqueness Theorem for slant immersions in complex space forms. Later,  in the contents of contact geometry, similar results were obtained for ``slant submanifolds'' in Sasakian space forms  \cite{Cab}, in Kenmotsu space forms \cite{PG},  as well as in cosymplectic space forms  \cite{GS}. 

Due to the popularity of slant submanifolds, F. Etayo \cite{E} defined the notion of pointwise slant submanifolds under the name of quasi-slant submanifolds as submanifolds whose Wirtinger angle $\theta (X)$  is independent of the choice  the nonzero vector $X \in T_p M $ at $p\in M$, but $\theta$ may depend on the point $p\in M$. Also, Etayo proved in \cite{E} that a complete, totally geodesic, quasi-slant submanifold of a Kaehler manifold is always a slant submanifold.

In \cite{CO}, B.-Y. Chen and O. J. Garay studied pointwise slant submanifolds of almost Hermitian manifolds and obtained many new results on such submanifolds. In particular, they provided many examples of pointwise slant submanifolds of almost Hermitian manifolds. Later on, pointwise slant submanifolds were investigated on Riemannian manifolds equipped with different structures  in \cite{PA1,S1}.

The main purpose of this paper is to establish the Existence and Uniqueness Theorems for pointwise slant immersions in complex space forms, which extend the Existence and Uniqueness Theorems of Chen and Vrancken. 

This paper is organized as follows: In Section 2, we recall some basic formulas and definitions for and pointwise slant submanifolds in almost Hermitian manifolds. In Section 3, we provide the  basic properties and formulas of pointwise slant submanifolds. In the last two sections, we prove
 the existence and uniqueness theorems for pointwise slant submanifolds in complex space forms, respectively.

\section{Preliminaries}

Let $\tilde M$ be an almost Hermitian manifold with an almost complex structure $J$ and an almost Hermitian metric $\<\,\; , \; \>$, which satisfy 
\begin{align}\label{3.1}
J^2  = -I,\;\;\;\;\<JX, JY\> =\<X, Y\>,
\end{align}
for any $X, Y$ be the vector fields on $\tilde M$.
An almost Hermitian manifold $\tilde M$ is called a {\it Kaehler} manifold if \cite{book11,Y} 
\begin{align}
\label{3.2}
(\tilde\nabla_X J)Y = 0,\;\; \forall X,Y\in TM,
\end{align}
where $\tilde\nabla$ denotes the Levi-Civita connection $\tilde M$. 

A Kaehler manifold is called a {\it complex space form} if it has constant holomorphic curvature. In the following, we shall denote  a complete simply connected $m$-dimensional complex space form with constant holomorphic curvature $c$ by $\tilde M^m(c)$.

The curvature tensor of $\tilde M^m(c)$ satisfies 
\begin{equation}\begin{aligned}\label{2.3}
\tilde R(X,Y)Z&=\frac{c}{4} \{\<Y,Z\>X-\<X,Z\>Y+\<JY,Z\>JX\\
&\hskip.3in -\<JX,Z\>JY+2\<X,JY\>JZ\}. 
\end{aligned}\end{equation}

Now, let $x:M\to \tilde M^{m}(c)$ be an isometric immersion of a Riemannian $n$-manifold  into a complex space form $\tilde M^m(c)$. We denote the {\it differential map} of $x$  by $x_{*}$ and let $T^\perp M$ denote  the normal bundle of $M$. 

For any $X\in TM$, we put
\begin{align}\label{2.4}
JX=PX+FX,
\end{align} 
where $PX$ and $FX$ denote the tangential and normal components of $JX$, respectively. 
Also, for any $V\in T^\perp M$, we write 
\begin{align}\label{2.5}
JV=tV+fV,
\end{align} 
where $tV$ and $fV$ are the tangential and normal components of $JV$, respectively. 
For $M$ in $\tilde M^m(c)$, let $\tilde\nabla$ and $\nabla$ be the Riemannian connections on $\tilde M$ and $M$,  respectively, while $\nabla^\perp$ is the normal connection in the normal bundle $T^\perp M$ of $M$. Then, the Gauss and Weingarten formulas are respectively given by 
\begin{align}\label{G}
&\tilde \nabla_X Y=\nabla_X Y+\sigma(X,Y),\;\;\;\\&\label{W} \tilde\nabla_XV=-A_VX+\nabla^\perp_XV,
\end{align} 
for any $X\in TM$ and $V\in T^\perp M$ such that $\sigma$ is the second fundamental form of $M$, and $A_V$ is the shape operator of the second fundamental form. 

It is well-known that the shape operator and the second fundamental form are related by 
\begin{align}\label{2.8}
\<A_VX, Y\>= \<\sigma(X,Y), V\>.
\end{align} 

For a submanifold $M$ in $\tilde M^m(c)$, let $R$ denote the curvature tensor of $M$, and $R^\perp$ denote the curvature tensor associated with the normal connection $\tilde \nabla$. Then the equation of Gauss, Codazzi and Ricci are  given respectively by \cite{C1*}
\begin{equation}\begin{aligned}\label{2.9}
\tilde R(X,Y; Z,W)=&\, R(X,Y;Z,W)+\<\sigma(X, Z), \sigma(Y, W)\>\\
&-\<\sigma(X, W), \sigma(Y, Z)\>,
\end{aligned}\end{equation}
\begin{align}\label{2.10}
(\tilde R(X,Y)Z)^\perp = (\bar\nabla_X\sigma)(Y, Z)-(\bar\nabla_Y\sigma)(X, Z)
\end{align}
and
\begin{align}\label{2.11}
\tilde R(X,Y;U,V)=R^\perp(X,Y;U,V)-\<[A_U,A_V]X,Y\>,
\end{align} 
for all $X, Y, Z, W \in TM$, and $U,V \in T^\perp M$, where ($\tilde R(X,Y)Z)^\perp $ is the normal component of $\tilde R(X,Y)Z$. 

The covariant derivative  $\bar\nabla \sigma$ of the second fundamental form $\sigma$ is defined by 
\begin{align}\label{2.12}
(\bar\nabla_X\sigma)(Y,Z)=\nabla_X^\perp\sigma(Y,Z)-\sigma(\nabla_XY,Z)-\sigma(Y,\nabla_XZ).
\end{align} 
The covariant derivatives of $P$ and $F$, respectively given by
\begin{align}\label{2.13}
&(\tilde\nabla_XP)Y=\nabla_XPY-P(\nabla_XY),
\\&\label{2.14}
(\tilde\nabla_XF)Y= \nabla^\perp_XFY-F(\nabla_XY).
\end{align} 
With the help of \eqref{3.2}--\eqref{W}, the above relations give (cf. \cite{C3})
\begin{align}\label{2.15}
&(\tilde\nabla_XP)Y= A_{FY}X + t\sigma(X,Y), 
\\& \label{2.16}
(\tilde\nabla_XF)Y= f\sigma(X,Y)- \sigma(X,PY),
\end{align} 
for  $X, Y \in TM$.

Let $M$ be an $n$-dimensional Riemannian manifold isometrically immersed into an almost Hermitian manifold $\tilde M$.
For a non-zero vector $X\in T_pM$, $p\in M$, the angle $\theta (X)$ between $JX$ and the tangent space $T_pM$ is called the {\it Wirtinger angle} of $X$. The Wirtinger angle is a real-valued function $\theta : T^*M \rightarrow R $, which is called the {\it Wirtinger function} defined on the set $T^*M$ consisting of all non-zero tangent vectors of $M$.

An isometric immersion $ f: M \rightarrow \tilde M$ is called {\it pointwise slant} if the Wirtinger angle  $\theta (X)$ can be regarded as a function on $ M$, which is known in \cite{CO} as the {\it slant function}. A pointwise slant submanifold with slant function $\theta$ is simply called a {\it pointwise $\theta$-slant submanifold.}

 Clearly, a pointwise slant submanifold $M$ is a slant submanifold if its slant function $\theta$ is a constant function on $M$ \cite{C2,C3}.  It is easy to verify that every surface of an almost Hermitian surface is pointwise slant (cf. Example 1 of \cite{C4}). 

A point $p$ of a submanifold $M$ of an almost Hermitian manifold is called a {\it totally real point} (resp.,  {\it complex point})  if  $\cos \theta=0$ (resp., $\sin \theta=0$) at $p$. 
A  submanifold $M$ of an almost Hermitian manifold is called a  {\it totally real submanifold}  if every point $p$ of $ M$ is totally real (cf. \cite{COg}). It is well-known that every pointwise slant submanifold is even-dimensional if it is not totally real (cf. Corollary 2.1 of \cite{C4}).

\section{Basics of pointwise slant submanifolds}

We recall the following basic result from  \cite{CO} for pointwise slant submanifolds of an almost Hermitian manifold.

\begin{lemma}\label{L3.1} Let $M $ be a submanifold of an almost Hermitian manifold. Then $M $ is pointwise slant if and only if 
\begin{align}\label{3.1}
P^2=- (\cos ^2\theta) I ,
\end{align} 
for some real-valued function $\theta$ defined on $M$, where $I$ is the identity map.
\end{lemma}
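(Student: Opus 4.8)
The plan is to characterize the endomorphism $P$ through the quadratic form $X\mapsto\<P^2X,X\>$ and then to use the symmetry of $P^2$ to upgrade a pointwise numerical identity about Wirtinger angles into an identity of endomorphisms.

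First I would record two elementary facts about $P$. For tangent vectors $X,Y$, since the normal part $FX$ is orthogonal to $Y$, \e{2.4} gives $\<PX,Y\>=\<JX,Y\>=-\<X,JY\>=-\<X,PY\>$; thus $P$ is skew-symmetric, and hence $P^2$ is symmetric. Next, decomposing $JX=PX+FX$ into its orthogonal tangential and normal parts and using $\<JX,JX\>=\<X,X\>$, the Wirtinger angle of a nonzero $X\in T_pM$ — the angle between $JX$ and $T_pM$, whose orthogonal projection onto $T_pM$ is exactly $PX$ — satisfies $\cos\theta(X)=|PX|/|X|$. Combining this with the skew-symmetry of $P$ yields the key identity
\[
\<P^2X,X\>=-\<PX,PX\>=-|X|^2\cos^2\theta(X),\qquad 0\neq X\in T_pM.
\]

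For the forward direction, assume $M$ is pointwise slant with slant function $\theta$, so that $\theta(X)=\theta(p)$ for every nonzero $X\in T_pM$. Then the key identity shows that the symmetric endomorphism $P^2+(\cos^2\theta)I$ has identically vanishing quadratic form on each tangent space, and therefore equals zero, which is \e{3.1}. For the converse, suppose $P^2=-(\cos^2\theta)I$ for some function $\theta$ on $M$, which we may assume to be valued in $[0,\pi/2]$. Evaluating the quadratic form gives $|PX|^2=\cos^2\theta(p)\,|X|^2$ for every nonzero $X\in T_pM$, hence $\cos\theta(X)=|\cos\theta(p)|$ is independent of the choice of $X$; thus the Wirtinger angle defines a function on $M$ and $M$ is pointwise slant.

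The only step I expect to require any care is the passage from the scalar identity to the operator identity \e{3.1} in the forward direction: a general endomorphism is not determined by its quadratic form, so the skew-symmetry of $P$ — equivalently, the symmetry of $P^2$ — established at the outset is genuinely needed there. Everything else is a direct unravelling of the definitions \e{2.4}--\e{2.5} together with the compatibility $\<JX,JY\>=\<X,Y\>$ of $J$ with the metric.
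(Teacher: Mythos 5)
Your proof is correct. The paper gives no proof of this lemma at all --- it is recalled from the Chen--Garay reference \cite{CO} --- and your argument (skew-symmetry of $P$ from compatibility of $J$ with the metric, the identity $\cos\theta(X)=|PX|/|X|$, and polarization of the vanishing quadratic form of the symmetric operator $P^2+(\cos^2\theta)I$) is precisely the standard one, with the one genuinely necessary point, namely that symmetry of $P^2$ is what lets you pass from the scalar identity to the operator identity, correctly identified and handled.
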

The following relations are direct consequences of equation \eqref{3.1}:
\begin{align}\label{3.2}
&\<PX,PY\>=(\cos^2\theta)\<X,Y\>,
\\&\label{3.3}
\<FX,FY\>=(\sin^2\theta_{}\<X,Y\>.
\end{align} 
Clearly, we also have
\begin{align}\label{3.4}
tFX=-(\sin^2{\theta})X,\;\;\;fFX=-FPX,
\end{align} 
for any vector field $X$ on $M$.

Now, we put 
\begin{align}\label{3.5}
X^*=({\csc \theta}){FX}.
\end{align} 
Let $\beta $ be the symmetric bilinear $TM$-valued form on $M$  defined by 
\begin{align}\label{3.6}
\beta(X,Y)= t\sigma(X,Y)
\end{align} 
for any $X,Y\in TM$. Then it follows from \eqref{2.4}, \eqref{3.5} and \e{3.6} that
\begin{align}\label{3.7}
J \beta (X,Y)= P\beta (X,Y)+(\sin\theta )\beta^*(X,Y).
\end{align} 
Also, if we let $\gamma$ be the symmetric bilinear $TM$-valued form on $M$ defined by
\begin{align}\label{3.8}
\gamma^* (X,Y)= f\sigma(X,Y),
\end{align} 
then  \eqref{2.5}, \eqref{3.6} and \e{3.8}, we find
\begin{align}\label{3.9}
J\sigma(X,Y)= \beta (X,Y)+\gamma^*(X,Y).
\end{align}
Applying the almost complex structure $J$ and using \eqref{3.1}, \eqref{2.5} and \eqref{3.7}, we get 
\begin{align*}
-\sigma(X,Y)=P\beta(X,Y)+(\sin\theta) \beta^*(X,Y)+t\gamma^*(X,Y)+f\gamma^*(X,Y).
\end{align*} 
Equating the tangential and the normal components, we obtain
\begin{align*}
P\beta(X,Y)=-t\gamma^*(X,Y),\;\;-\sigma(X,Y)=(\sin\theta)\beta^*(X,Y)+f\gamma^*(X,Y).
\end{align*}
Using \eqref{3.4} and \eqref{3.5}, we conclude that 
\begin{align*}
\gamma(X,Y)=(\csc\theta) P\beta(X,Y).
\end{align*} 
Also,
\begin{align}\label{3.10}
\sigma(X,Y)= - (\csc\theta)\beta^*(X,Y),
 \end{align} 
which can be written as 
\begin{align}\label{3.11}
\sigma(X,Y)=(\csc^2\theta)(P\beta(X,Y) - J\beta(X,Y) ).
\end{align} 
Taking the inner product of \eqref{2.15} with $Z\in TM$ and using \eqref{2.5}, \eqref{2.8}  and \eqref{3.6}, we derive that
\begin{align*}
\langle(\tilde\nabla_XP)Y,Z\rangle=\<\beta(X,Y), Z\>- \<\beta(X,Z),Y\>.
\end{align*} 

For an $n$-dimensional pointwise $\theta$-slant submanifold $M$ of a complex space form $\tilde M^m(c)$, we derive the equations of Gauss and Codazzi of $M$ in $\tilde M^m(c)$ as follows:

From \eqref{2.3}, we have 
\begin{align*}
\tilde R(X,Y;Z,W)&=\frac{c}{4} \big\{\<X,W\>\<Y,Z\>-\<X,Z\>\<Y,W\>+\<JX,W\>\<JY,Z\>\\ 
&\hskip.3in -\<JX,Z\>\<JY,W\>+2\<X,JY\>\<JZ,W\>\big\}.
\end{align*}
Substituting \eqref{2.9} into the above equation, we find
\begin{align*}
R(X,Y;Z&,W)+\<\sigma(X,Z),\sigma(Y, W)\>-\<\sigma(X,W),\sigma(Y,Z)\>\\
&=\frac{c}{4} \big\{\<X,W\>\<Y,Z\>-\<X,Z\>\<Y,W\>+\<PX,W\>\<PY,Z\>\\
&\hskip.3in -\<PX,Z\>\<PY,W\>+ 2\<X,PY\>\<PZ,W\>\big\}.
\end{align*}
Using \eqref{3.2}  and \eqref{3.11}, we may write 
\begin{align}\label{3.12}
R(X,Y;Z,W) &= (\csc^2\theta)\big\{\<\beta(X,W),\beta(Y,Z)\>-\<\beta(X,Z),\beta(Y,W)\>\big\}\notag\\
&+\frac{c}{4} \big\{\<X,W\>\<Y,Z\>-\<X,Z\>\<Y,W\>+\<PX,W\>\<PY,Z\>\\
&-\<PX,Z\>\<PY,W\>+2\<X,PY\>\<PZ,W\>\big\}, \notag
\end{align} 
which gives the Gauss equation of $M$ in $\tilde M^m(c)$. 

Next, for Codazzi equation if we take the normal parts of \eqref{2.3},  we obtain 
\begin{align}\label{3.13}
(\tilde R(X,Y;Z,W))^\perp&=\frac{c}{4}\big \{\<PY,Z\>FX-\<PX,Z\>FY+2\<X,PY\>FZ\big\}.
\end{align}
Further, it follows from \eqref{3.5} and \eqref{3.10} that 
\begin{align*}
\nabla^\perp_X(\sigma(Y, Z))= -\nabla^\perp_X( (\csc^2\theta) F\beta(Y,Z)),
\end{align*}
which yields
\begin{align*}
\nabla^\perp_X(\sigma(Y, Z))= - (\csc^2\theta) \nabla^\perp_XF\beta(Y,Z)+2 (\csc^2\theta \cot\theta) (X\theta)F\beta(Y,Z).
\end{align*}
Then by \eqref{2.16}, the above equation takes the form
\begin{align*}
\nabla^\perp_X(\sigma(Y, Z))&= - (\csc^2\theta) \Big[ f \sigma(X,\beta(Y,Z))- \sigma(X,P\beta(Y,Z))\\
&\hskip.5in  +F(\nabla_X\beta(Y,Z))-2(\cot\theta) (X\theta)F\beta(Y,Z)\Big]. 
\end{align*}

On the other hand, it also follows from \eqref{3.5} and \eqref{3.10} that
\begin{align*}
\sigma(\nabla_XY, Z)=-(\csc^2\theta) F\beta(\nabla_XY, Z).
\end{align*}
Similarly, we have
\begin{align*}
\sigma(Y, \nabla_XZ)= - (\csc^2\theta) F\beta(Y,\nabla_XZ).
\end{align*}
Substituting these relations into \eqref{2.12}, we obtain 
\begin{align*}
(\bar\nabla_X\sigma)(Y,Z)=&\, -(\csc^2\theta) \big[f\sigma(X,\beta(Y,Z))- \sigma(X,P\beta(Y,Z)) \\
& +F((\nabla_X\beta)(Y,Z))-2(\cot\theta) (X\theta)F\beta(Y,Z)]\big.
\end{align*}
Thus, by using \eqref{3.4}, \eqref{3.5} and \eqref{3.10}, we can write 
\begin{equation}\begin{aligned}\label{3.14}
(\bar\nabla_X\sigma)(Y,Z)=&\, -(\csc^2\theta) \big[(\csc^2\theta) FP\beta(X,\beta (Y,Z))\\
& + (\csc^2\theta) F\beta(X,P\beta (Y,Z))+F((\nabla_X\beta)(Y, Z))\\&-2(\cot\theta) (X\theta)F\beta(Y,Z)\big].
\end{aligned}\end{equation}
Similarly, we have
\begin{equation}\begin{aligned}\label{3.15}
(\bar\nabla_Y\sigma)(X,Z)=&-(\csc^2\theta)\big[(\csc^2\theta) FP\beta(Y,\beta (X,Z))\\&+ (\csc^2\theta) F \beta(Y,P\beta(X, Z))\\
&+F((\nabla_Y\beta)(X, Z))-2(\cot\theta) (Y\theta)F\beta(X,Z)\big].
\end{aligned}\end{equation}
Finally, after applying \eqref{3.13}, \eqref{3.14} and \eqref{2.11} into Codazzi's equation, we get 
\begin{align*}
&\hskip-.2in (\tilde\nabla_X\beta)(Y,Z)+(\csc^2\theta) \big\{P\beta(X,\beta(Y,Z))+\beta(X,P\beta (Y,Z))\big\}\\
&\hskip.2in +\frac{c}{4}(\sin^2\theta) \big\{\<X,PY\>Z+\<X,PZ\>Y\big\}-2(\cot\theta) (X\theta)\beta(Y,Z)\\
&= (\tilde\nabla_Y\beta)(X,Z)+(\csc^2\theta) \big\{P\beta(Y,\beta (X,Z))+\beta(Y,P\beta (X,Z))\big\}\\
&\hskip.2in +\frac{c}{4}(\sin^2\theta) \big\{\<Y,PX\>Z+\<Y,PZ\>X\big\}-2(\cot\theta) (Y\theta)\beta(X,Z).
\end{align*}


\section{Existence theorem}

In this section we present the detailed proofs of the existence and uniqueness theorems for pointwise slant immersions into a complex space form. 

\begin{theorem} {\rm{(Existence Theorem)}} \label{T:4.1}
Let $M^n$ be a simply connected Riemannian manifold of dimension $n$ equipped with metric tensor $\<\,\; ,\; \>$. Suppose that $c$ is a constant and there exist a smooth function $\theta$ on $M^{n}$ satisfying $0<\theta\leq \frac{\pi}{2}$,
an endomorphism $P$ of the tangent bundle $TM^{m}$ and a symmetric bilinear $TM^{n}$-valued form $ \beta $ on $M^{n}$ such that the following conditions are satisfied :
\begin{align}\label{4.1}
&P^2X=-(\cos^2\theta)X ,
\\&\label{4.2}
\<PX,Y\>= -\<X,PY\>,
\\&\label{4.3}
\langle (\tilde\nabla_XP)Y,Z\rangle=\<\beta(X,Y),Z\> -\<\beta(X,Z),Y\>,
\end{align} 
\begin{align}\label{4.4}
R(X,Y;Z,W) &= (\csc^2\theta)\big\{\<\beta(X,W),\beta(Y,Z)\>-\<\beta(X,Z),\beta(Y,W)\>\big\}\notag\\
&+\frac{c}{4} \big\{\<X,W\>\<Y,Z\>-\<X,Z\>\<Y,W\>+\<PX,W\>\<PY,Z\>\\
&- \<PX,Z\>\<PY,W\>+2\<X,PY\>\<PZ,W\>\big\}, \notag
\end{align} 
\begin{equation}\begin{aligned}\label{4.5}
&\hskip-.2in (\tilde\nabla_X\beta)(Y,Z)+(\csc^2\theta) \big\{P\beta(X,\beta (Y,Z))+\beta(X,P\beta (Y,Z))\big\}\\
&\hskip.2in  +\frac{c}{4}(\sin^2\theta) \big\{\<X,PY\>Z+\<X,PZ\>Y\big\}-2(\cot\theta) (X\theta)\beta(Y,Z)\\
&= (\tilde\nabla_Y\beta)(X,Z)+(\csc^2\theta) \big\{P\beta(Y,\beta (X,Z))+\beta(Y,P\beta (X,Z))\big\}\\&\hskip.2in +\frac{c}{4}(\sin^2\theta) \big\{\<Y,PX\>Z+\<Y,PZ\>X\big\}-2(\cot\theta) (Y\theta)\beta(X,Z),\end{aligned}\end{equation}
 for $X,Y,Z \in TM^{n}$. Then there exists a pointwise $\theta$-slant isometric immersion of $M^{n}$ into a complex space form $\tilde M^n(c)$ such that the second fundamental form $\sigma$ of $M^{n}$ is given by
 \begin{align}\label{4.6}
\sigma(X,Y)=(\csc^2\theta)(P\beta(X,Y) -J\beta(X,Y) ).
\end{align}
\end {theorem}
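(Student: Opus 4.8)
The plan is to manufacture, over $M^{n}$, an abstract model of the normal bundle carrying all the data a pointwise $\theta$-slant immersion would induce, and then to appeal to the fundamental existence theorem for isometric immersions into a complete simply connected complex space form (cf. \cite{CV,CV2}). Let $\nu$ be a second copy of $TM^{n}$, equipped with the metric transported from $\<\,,\,\>$, and let $\iota\colon TM^{n}\to\nu$ be the tautological bundle isometry; then $\operatorname{rank}\nu=n=\dim_{\mathbb R}\tilde M^{n}(c)-\dim M^{n}$. On $E:=TM^{n}\oplus\nu$ with the orthogonal direct sum metric, define an endomorphism $\mathcal J$ by
\[
\mathcal J(X,\iota W)=\bigl(PX-(\sin\theta)W,\ \iota((\sin\theta)X-PW)\bigr),
\]
i.e.\ the tangential/normal blocks of $\mathcal J$ are $P$, $FX:=(\sin\theta)\iota X$, $t(\iota W):=-(\sin\theta)W$ and $f(\iota W):=-\iota(PW)$. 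Using \eqref{4.1} and \eqref{4.2} one verifies directly that $\mathcal J^{2}=-\operatorname{Id}_{E}$ and $\<\mathcal J\xi,\mathcal J\eta\>=\<\xi,\eta\>$, so $(E,\<\,,\,\>,\mathcal J)$ is an almost Hermitian bundle over $M^{n}$; the hypothesis $0<\theta\le\pi/2$ ensures $F$ is injective and $\csc\theta$ is well defined throughout.

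\textbf{Connection and structure equations.} Next I would connect $E$. On $TM^{n}$ take the Levi-Civita connection $\nabla$ of $\<\,,\,\>$, define $\sigma\colon TM^{n}\times TM^{n}\to\nu$ by \eqref{4.6} — equivalently $\sigma(X,Y)=-(\csc\theta)\,\iota(\beta(X,Y))$ — and let $A$ be the associated shape operator, $\<A_{V}X,Y\>=\<\sigma(X,Y),V\>$. Then define the connection $\nabla^{\perp}$ on $\nu$ by the formula that the identity $\nabla^{\perp}_{X}FY-F(\nabla_{X}Y)=f\sigma(X,Y)-\sigma(X,PY)$ forces, namely
\[
\nabla^{\perp}_{X}(\iota Y)=\iota(\nabla_{X}Y)+(\csc^{2}\theta)\,\iota\bigl(P\beta(X,Y)+\beta(X,PY)\bigr)-(\cot\theta)(X\theta)\,\iota Y,
\]
and check that $\nabla^{\perp}$ is a metric connection. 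With these choices the structure equations $(\tilde\nabla_{X}P)Y=A_{FY}X+t\sigma(X,Y)$ and $(\tilde\nabla_{X}F)Y=f\sigma(X,Y)-\sigma(X,PY)$ both hold: the first is exactly hypothesis \eqref{4.3}, once one computes $\<A_{FY}X,Z\>=-\<\beta(X,Z),Y\>$ and $t\sigma(X,Y)=\beta(X,Y)$, and the second holds by the very definition of $\nabla^{\perp}$. The two remaining structure equations, governing $\tilde\nabla t$ and $\tilde\nabla f$, then follow automatically from these together with the algebraic identities $\mathcal J^{2}=-\operatorname{Id}$ and $\mathcal J^{\mathsf T}=-\mathcal J$, which determine the $\nu\to TM^{n}$ and $\nu\to\nu$ blocks of $\tilde\nabla\mathcal J$ from its $TM^{n}\to TM^{n}$ and $TM^{n}\to\nu$ blocks.

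\textbf{Gauss, Codazzi, Ricci.} It then remains to check the equations of Gauss, Codazzi and Ricci for the curvature tensor $\tilde R$ of $\tilde M^{n}(c)$ given by \eqref{2.3}. The Gauss equation is precisely hypothesis \eqref{4.4} (after the algebraic reduction $\<PX,PY\>=(\cos^{2}\theta)\<X,Y\>$ coming from \eqref{4.1}--\eqref{4.2} and the substitution of \eqref{4.6}). The Codazzi equation, in the form $(\tilde R(X,Y)Z)^{\perp}=(\bar\nabla_{X}\sigma)(Y,Z)-(\bar\nabla_{Y}\sigma)(X,Z)$, unwinds — running the computations of Section~3 in reverse, using the structure equation for $F$ just established — into hypothesis \eqref{4.5}. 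The Ricci equation is \emph{not} among the hypotheses, and this is the step I expect to be the main obstacle: one must prove $R^{\perp}(X,Y;U,V)-\<[A_{U},A_{V}]X,Y\>=\tilde R(X,Y;U,V)$ holds automatically. The point is that $\nu$, as a bundle with connection, is a controlled twist of $(TM^{n},\nabla)$, so $R^{\perp}$ is expressible through $R$, $\beta$, $\theta$ and their first derivatives; substituting the Gauss equation \eqref{4.4}, the relation \eqref{4.3} for $\tilde\nabla P$, and the Codazzi equation \eqref{4.5} should collapse the Ricci identity (whose right-hand side, via $A_{\iota\bar U}=-(\csc\theta)\beta_{\bar U}$, is governed by the same $\beta$-quadratic terms that appear in \eqref{4.4}) to an algebraic consequence of \eqref{4.1}--\eqref{4.5}.

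\textbf{Conclusion.} Having assembled $(\nu,\nabla^{\perp},\sigma,\mathcal J)$ satisfying all of Gauss--Codazzi--Ricci together with the four structure equations for $\mathcal J$, and $M^{n}$ being simply connected, the fundamental existence theorem for isometric immersions into $\tilde M^{n}(c)$ (cf.\ \cite{CV,CV2}) yields an isometric immersion $f\colon M^{n}\to\tilde M^{n}(c)$ whose normal bundle, normal connection and second fundamental form are $(\nu,\nabla^{\perp},\sigma)$, and for which the ambient Kaehler structure $J$ pulls back to $\mathcal J$; in particular $P$ is the tangential part of $J\circ f_{*}$. Then \eqref{4.1} says $P^{2}=-(\cos^{2}\theta)I$, so by Lemma~\ref{L3.1} the Wirtinger angle of $f$ is the function $\theta$ at every point, i.e.\ $f$ is pointwise $\theta$-slant, and by construction its second fundamental form is given by \eqref{4.6}. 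This completes the proof.
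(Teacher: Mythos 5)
Your construction is exactly the paper's: the same Whitney sum $TM^{n}\oplus TM^{n}$ with the same almost complex structure (the paper's $\hat J$ in \eqref{4.7}), the same second fundamental form, shape operator and normal connection (the paper's \eqref{4.8}--\eqref{4.10}), with the Gauss and Codazzi equations read off directly from hypotheses \eqref{4.4} and \eqref{4.5} and the conclusion drawn from a fundamental existence theorem (the paper invokes Theorem 1 of \cite{ET} rather than \cite{CV,CV2}, but the tool is the same). The only substantive difference is that you leave the verification of the Ricci equation as a plausibility argument, whereas that is precisely the computation the paper carries out explicitly (its displays \eqref{4.11} and \eqref{4.12} and the identity following them, which reduce $\langle R^{\perp}(X,Y)Z^*,W^*\rangle-\langle[A_{Z^*},A_{W^*}]X,Y\rangle$ to the required curvature expression using \eqref{4.2}, \eqref{4.3} and \eqref{4.5}); so your proof is correct in outline and approach but incomplete at exactly the step you yourself flag as the main obstacle.
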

\begin{proof}
Assume that $c,\;\theta,\;P$ and $M^{n}$ satisfy the conditions  given in the theorem. Suppose that $TM^{n} \oplus TM^{n}$ be a Whitney sum.  For each $X\in TM^{n}$, we simply denote $(X,0)$ by $X$, $(0,X)$ by $X^*$, and the product metric on $TM^{n} \oplus TM^{n}$  by $\<\,\;, \;\>$. We define the 
endomorphism $\hat {J }$ on $TM^{n} \oplus TM^{n}$ by 
 \begin{align}\label{4.7}
\hat {J}(X,0)= (PX , (\sin \theta) X),\;\;\; \hat {J}(0,X)= (-(\sin \theta) X , -PX),
\end{align}
for each $X\in TM$. Then by \eqref{3.1}, \eqref{2.4} and \eqref{3.5}, we find 
\begin{align*}
&\hat {J}^2 ((X,0))= \hat {J}(PX , (\sin \theta) X)=-(X,0).
\end{align*}
Similarly, we find $ \hat {J}^2 ((0,X))=-(0,X)$. Hence, ${J}^2 =-I$. Also, it is easy to check that $\<\hat {J}X, \hat {J}Y\>=\<X,Y\>$ and it can be obtained by \eqref{4.7}. Therefore, $(\hat {J},\<\,\; , \;\>)$ is an almost Hermitian structure on $M$.

Now, we can define an endomorphism $A$ on $TM^{n}$, a $(TM^{n})^*$-valued symmetric bilinear form $\sigma$ on $TM^{n}$ and a metric connection $\nabla^\perp$ of the vector bundle 
$(TM^{n})^*$ over $M^{n}$ as follows :
 \begin{align}\label{4.8}
 &A_{Y^*}X= (\csc\theta) \{(\tilde\nabla_XP)Y - \beta(X,Y) \},
\\& \label{4.9}
\sigma(X,Y)=- (\csc\theta) \beta^*(X,Y) ,
\\& \label{4.10}
\nabla^\perp_XY^*=(\nabla_XY)^*-(\cot\theta) (X\theta)Y^*+(\csc^2\theta) \{P\beta^*(X,Y) +\beta^*(X,PY)\},
\end{align}
for $X,Y\in TM$. 

Denote by $\hat \nabla $ the canonical connection on $TM^{n} \oplus TM^{n}$ induced from equations \eqref{4.7}-\eqref{4.10}. Using \eqref{2.13}, \eqref{3.4}, \eqref{3.6} and \eqref{4.8}-\eqref{4.10}, we get 
\begin{align*}
(\hat \nabla _X \hat J)Y= (\hat \nabla _X \hat J)Y^*=0,
\end{align*}
for any $X,Y\in TM^{n}$.

Let $R^\perp$ be the curvature tensor corresponding to the connection  $\nabla^\perp$ on $(TM^{n})^*$, which gives by
\begin{align*}
R^\perp(X,Y)Z^*=\nabla^\perp_X\nabla^\perp_Y Z^*- \nabla^\perp_Y\nabla^\perp_X Z^*-\nabla^\perp_{[X,Y]}Z^*,
\end{align*}
for any $X,Y,Z\in TM^{n}$.
Then by \eqref{4.10}, we have 
\begin{align*}
R^\perp(X,Y)Z^*=\,&\nabla^\perp_X\big[(\nabla_Y Z)^*-\cot\theta (Y\theta)Z^*+\csc^2\theta \{P\beta^*(Y,Z)+\beta^* (Y,PZ)\}\big]\\
&-\nabla^\perp_Y\big[(\nabla_X Z)^*-\cot\theta (X\theta)Z^*+\csc^2\theta \{P\beta^*(X,Z)+\beta^* (X,PZ)\}\big]\\
& - (\nabla_{[X,Y]}Z)^*+\cot\theta ([X,Y]\theta)Z^*\\&- \csc^2\theta \{P\beta^*([X,Y],Z)+\beta^* ([X,Y],PZ)\}.
\end{align*}
Now, by \eqref{4.2}, \eqref{2.13}, \eqref{4.5} and \eqref{4.10} with direct calculations, we have the following relation
\begin{equation}\begin{aligned}\label{4.11}
 R^\perp(X,Y)Z^*= \,&(\csc^2\theta)\big[(Y\theta)-(X\theta)\big]Z^*+ (R(X,Y)Z)^*\\
&+\Big\{\frac{c}{4}P\big\{\<Y,PZ\>X-\<X,PZ\>Y-2\<X,PY\>Z\big\}\\
&+\frac{c}{4}\big\{\<Y,P^2Z\>X-\<X,P^2Z\>Y-2\<X,PY\>PZ\big\}\\
&+(\csc^2\theta) \big[(\tilde\nabla_XP)\beta(Y,Z)-(\tilde\nabla_YP)\beta(X,Z)\\
&-\beta(X,(\tilde\nabla_YP)Z)+\beta(Y,(\tilde\nabla_XP)Z)\big]\Big\}^*.
\end{aligned}\end{equation}

On the other hand, from \eqref{4.3} and \eqref{4.8}, we derive  
\begin{align}\label{4.12}
\langle [A_{Z^*}, A_{W^*}]X, Y\rangle = \,&(\csc^2\theta) \big\{\<(\tilde\nabla_XP)W,(\tilde\nabla_YP)Z\>-\<(\tilde\nabla_XP)Z,  (\tilde\nabla_YP)W\>\notag\\
 &+\<(\tilde\nabla_XP)Z,\beta(Y,W)\>+\<(\tilde\nabla_YP)W,\beta(X,Z)\>\notag\\
 &-\<(\tilde\nabla_XP)W,\beta(Y,Z)\>-\<(\nabla_YP)Z,\beta(X,W)\>\\
 &+\<\beta(X,W),\beta(Y,Z)\>-\<\beta(X,Z),\beta(Y,W)\>\big\}.\notag
\end{align}
Also, using \eqref{4.2}, we can write 
\begin{align*}
\<\beta(Y,Z),PW\>+\<P\beta(Y,Z),W\>=0.  
\end{align*}
Taking the covariant derivative of the above equation with respect to $X$ with using \eqref{2.13} and \eqref{4.2}, we obtain
\begin{align*}
\<\beta(Y,Z),(\tilde\nabla_XP )W\>+\<(\tilde\nabla_XP)\beta(Y,Z),W\>=0.
\end{align*}
Furthermore, from \eqref{4.3}, we find 
\begin{align*}
\<(\tilde\nabla_XP)Z,(\tilde\nabla_YP)W\>=\<(\tilde\nabla_XP)Z,\beta(Y,W)\>-\<\beta(Y,(\tilde\nabla_XP)Z),W\>.
\end{align*}
Substituting these relations in \eqref{4.11} and \eqref{4.12} with a simple computation, we arrive at 
\begin{align*}
&\hskip-.2in \<R^\perp(X,Y)Z^*, W^*\>-\<[A_{Z^*}, A_{W^*}]X, Y\>&\\
&=\frac{c}{4}\big[(\sin^2\theta) \{\<X,W\>\<Y,Z\>-\<X,Z\>\<Y,W\>\}-2\<X,PY\>\<PZ,W\>\big]\\
&+(\csc^2\theta)\big[Y\theta- X\theta\big]\<Z,W\>.
\end{align*}
Notice that the last equation with \eqref{2.3}, \eqref{4.1} and \eqref{4.2} means that $(M^{n}, A, \nabla^\perp )$ satisfies the Ricci equation of an $n$-dimensional pointwise $\theta$-slant submanifold of $\tilde M^n(c)$, while \eqref{4.4} and  \eqref{4.5} mean that $(M^{n}, \sigma)$ satisfies the equations of Gauss and Codazzi, respectively. Therefore, we have a vector bundle $TM^{n} \oplus TM^{n}$ over $M^{n}$ equipped with the product metric $\<\,\; ,\; \>$, the second fundamental form $\sigma$, the shape operator $A$, and the connections $\nabla^\perp$ and $\hat \nabla $ which satisfy the structure equations of $n$-dimensional pointwise $\theta$-slant submanifold of $\tilde M^n(c)$. Consequently, by applying Theorem 1 of \cite{ET} we conclude that there exists a pointwise $\theta$-slant isometric immersion from $M^{n}$ into $\tilde M^n(c)$ whose second fundamental form is given by $\sigma(X,Y)=(\csc^2\theta) (P\beta(X,Y) - J \beta(X,Y) )$.\end{proof}

A submanifold of an almost Hermitian manifold is called {\it purely real} if it contains no complex points (cf. \cite{C81}). It was proved in \cite{C4} that Ricci's equation is a consequence of the Gauss and Codazzi equations for purely real surfaces in any Kaehler surface. On the other hand, Theorem \ref{T:4.1} implies the following.

\begin{corollary}
The Ricci equation is a consequence of the Gauss and Codazzi equations for  n-dimensional pointwise slant submanifolds in any complex space form $\tilde M^{n}(c)$.
\end{corollary}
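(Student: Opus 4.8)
The plan is to derive the corollary directly from the proof of Theorem~\ref{T:4.1}, rather than from its statement. Observe that in that proof we never assumed the Ricci equation as a hypothesis: the input data were only the algebraic constraints \eqref{4.1}--\eqref{4.2} on $P$, together with the Gauss equation \eqref{4.4} and the Codazzi equation \eqref{4.5}. From these we \emph{computed} the quantity $\langle R^\perp(X,Y)Z^*,W^*\rangle - \langle [A_{Z^*},A_{W^*}]X,Y\rangle$ and found it to equal $\tfrac{c}{4}\big[(\sin^2\theta)\{\langle X,W\rangle\langle Y,Z\rangle - \langle X,Z\rangle\langle Y,W\rangle\} - 2\langle X,PY\rangle\langle PZ,W\rangle\big] + (\csc^2\theta)[Y\theta - X\theta]\langle Z,W\rangle$. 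Comparing with \eqref{2.3} written in normal form (using \eqref{2.4}, \eqref{3.4} and \eqref{3.5} to express $\langle JX,W^*\rangle$ etc. in terms of $P$ and $\theta$), one checks that the right-hand side is precisely $\tilde R(X,Y;Z^*,W^*)$; hence the Ricci equation \eqref{2.11} holds automatically. So the corollary is really the statement ``\eqref{4.4} and \eqref{4.5} $\Rightarrow$ \eqref{2.11}'' for an abstract pointwise $\theta$-slant submanifold.

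First I would set up the statement intrinsically: let $M^n$ be a pointwise $\theta$-slant submanifold of $\tilde M^n(c)$, so that Lemma~\ref{L3.1} and the identities \eqref{3.1}--\eqref{3.11} hold, and $\beta$ is defined by \eqref{3.6}. Then I would simply invoke the computation carried out inside the proof of Theorem~\ref{T:4.1}: the derivation of \eqref{4.11}, of \eqref{4.12}, and of the final displayed identity for $\langle R^\perp(X,Y)Z^*,W^*\rangle - \langle[A_{Z^*},A_{W^*}]X,Y\rangle$ used only \eqref{4.2}, \eqref{2.13}, \eqref{4.3}=\eqref{3.1-consequence}, \eqref{4.4} and \eqref{4.5} — all of which are valid for any pointwise slant submanifold. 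The point is that the ambient curvature identity \eqref{2.3}, hence the normal-component identity \eqref{3.13}, is what makes the combination collapse to the Ricci expression; one must verify that the $\frac{c}{4}$-terms and the $(\csc^2\theta)[Y\theta-X\theta]$-term together reconstitute $\tilde R(X,Y;Z^*,W^*)$ with $Z^*,W^*$ normal, using $FX = (\sin\theta)X^*$ and $tFX = -(\sin^2\theta)X$.

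The only genuine obstacle is bookkeeping: one must confirm that the cross-terms involving $(\tilde\nabla_XP)\beta(Y,Z)$ in \eqref{4.11} cancel against the corresponding terms in \eqref{4.12} after substituting the two auxiliary identities $\langle\beta(Y,Z),(\tilde\nabla_XP)W\rangle + \langle(\tilde\nabla_XP)\beta(Y,Z),W\rangle = 0$ and $\langle(\tilde\nabla_XP)Z,(\tilde\nabla_YP)W\rangle = \langle(\tilde\nabla_XP)Z,\beta(Y,W)\rangle - \langle\beta(Y,(\tilde\nabla_XP)Z),W\rangle$, exactly as in the proof above. Since that cancellation is precisely what was checked there, the corollary follows with no new work; I would phrase the proof as: ``In the proof of Theorem~\ref{T:4.1} the Ricci equation \eqref{2.11} was obtained as a formal consequence of \eqref{4.2}, \eqref{4.4} and \eqref{4.5}; since every pointwise $\theta$-slant submanifold of $\tilde M^n(c)$ satisfies these (by \eqref{3.1}, the Gauss equation \eqref{3.12}, and the Codazzi equation derived at the end of Section~3), the conclusion follows.'' If a fully self-contained argument is wanted instead, one repeats the $R^\perp$-versus-$[A,A]$ computation verbatim with $T^\perp M$ in place of the Whitney summand $(TM^n)^*$ and $F$ in place of the bundle map $X\mapsto X^*$; this is the step where care is needed to keep the $X\theta$ terms and the $P$-twisted terms straight, but it is otherwise routine.
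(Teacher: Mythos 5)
Your proposal is correct and is essentially the paper's own (implicit) argument: the corollary is stated as an immediate consequence of Theorem~\ref{T:4.1} precisely because, in that proof, the Ricci equation is \emph{derived} from \eqref{4.2}, \eqref{4.4} and \eqref{4.5} rather than assumed, and every pointwise $\theta$-slant submanifold of $\tilde M^{n}(c)$ satisfies those hypotheses via \eqref{3.1}, \eqref{3.12} and the Codazzi identity at the end of Section~3. The one point worth flagging is the leftover term $(\csc^2\theta)\big[Y\theta-X\theta\big]\langle Z,W\rangle$ in the paper's final display, which does not occur in $\tilde R(X,Y;Z^*,W^*)$ and must vanish for the identification with \eqref{2.11} to be exact --- but that is a bookkeeping issue in the paper's computation, not a gap in your approach.
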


\section{Uniqueness theorem}

The next result provides the sufficient conditions to have the uniqueness property for pointwise slant  immersions.
\begin{theorem} {\rm (Uniqueness Theorem)}  \label{T:5.1} Let $\tilde M^n(c)$ be a complex space form and $M^n$ be a connected Riemannian n-manifold. Let $x^1,x^2: M^{n} \rightarrow \tilde M^n(c)$ be two pointwise $\theta$-slant isometric immersions with $0<\theta\leq \frac{\pi}{2}$. Suppose that $\sigma_1$ and $\sigma_2$ are the second fundamental forms of $x^1$ and $x^2$, respectively. If we have 
\begin{align}\label{5.1}
\<\sigma_1(X,Y),Jx^1_*Z\>=\<\sigma_2(X,Y),Jx^2_*Z\>,
\end{align}
for all $X,Y,Z \in TM$. In addition, if we consider that at least one of the following conditions is satisfied:
\begin{enumerate}
\item[(i)]  Every point is totally real point,
\item[(ii)] there exists a point $p$ of $M$ such that $P_1 = P_2 $,
\item[(iii)] $c\neq 0$,
\end{enumerate}
then $P_1 = P_2 $ and there exists an isometry $\phi$ of $\tilde M^n(c)$ such that $x^1=\phi(x^2)$.
\end {theorem}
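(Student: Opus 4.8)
The natural approach is to mimic the classical Chen--Vrancken uniqueness argument for slant immersions, reducing everything to the standard fundamental uniqueness theorem for isometric immersions into space forms (i.e., if two immersions induce the same second fundamental form relative to a connection-preserving bundle isometry, they differ by an ambient isometry). The first step is to produce, at each point $p\in M^n$, a linear isometry $\tau_p: T^\perp_{x^1} M^n \to T^\perp_{x^2} M^n$ between the normal spaces and to show it is parallel (intertwines $\nabla^{\perp,1}$ and $\nabla^{\perp,2}$) and carries $\sigma_1$ to $\sigma_2$. The bridge between the tangent and normal data is the decomposition $Jx^i_*Z = x^i_*P_i Z + F_i Z$ from \eqref{2.4}; since both immersions are pointwise $\theta$-slant with the \emph{same} slant function $\theta$ (this is forced: the Wirtinger angle is determined by $\sigma$ via the ambient K\"ahler structure, or one simply assumes it, as the statement does), relations \eqref{3.3}--\eqref{3.5} give $\|F_i Z\|^2 = (\sin^2\theta)\|Z\|^2$, so $Z \mapsto F_i Z$ is (up to the factor $\sin\theta$) an isometry onto its image, and $Z^* = (\csc\theta)F_i Z$ furnishes a distinguished orthonormal-type framing of the normal bundle.

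The core computation is to first establish $P_1 = P_2$ on all of $M^n$ under each of the three hypotheses. Differentiating \eqref{5.1} and using the Gauss--Weingarten formulas, \eqref{2.15}--\eqref{2.16}, together with the identity $\langle(\tilde\nabla_XP)Y,Z\rangle = \langle\beta(X,Y),Z\rangle - \langle\beta(X,Z),Y\rangle$ derived in Section 3, one shows that $P_1 - P_2$ satisfies a first-order linear homogeneous ODE along curves. In case (ii) the initial condition $P_1 = P_2$ at $p$ then propagates by connectedness; in case (i), where $\cos\theta \equiv 0$, \eqref{4.1} forces $P_i^2 = 0$ and combined with the skew-symmetry \eqref{4.2} and \eqref{5.1} one gets $P_1 = P_2 = 0$ directly (the immersions are totally real); case (iii) is the substantive one — here one exploits the $c$-dependent terms in the Gauss equation \eqref{4.4} and the Codazzi equation, or more directly uses \eqref{5.1} applied with the structure of $\tilde R$ from \eqref{2.3}: taking appropriate traces/contractions of \eqref{5.1} and invoking $\langle\sigma_i(X,Y),Jx^i_*Z\rangle = \langle A_{F_iZ}X - (\tilde\nabla_XP_i)Y, \text{(tangential)}\rangle$-type relations isolates $\langle P_1 X, Y\rangle - \langle P_2 X, Y\rangle$ multiplied by a nonzero multiple of $c$, forcing $P_1 = P_2$ when $c \neq 0$.

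Once $P_1 = P_2 =: P$ is known, define $\tau_p$ on the image of $F_1$ by $\tau_p(F_1 Z) = F_2 Z$ (well-defined and isometric since $\ker F_i = \{Z : \sin\theta\,\|Z\| = 0\}$ is trivial where $\theta \neq 0$, and on a totally real point one uses that $F_i$ is an isomorphism onto the full normal space there, $\dim T^\perp = n$); when $0 < \theta < \pi/2$ the normal bundle may be larger than $\operatorname{im}F_i$, and one extends $\tau_p$ using $\hat J$ and the splitting $T^\perp M^n = \operatorname{im}F \oplus J(\operatorname{im}P)$ coming from \eqref{3.4} and \eqref{3.11}, declaring $\tau$ to intertwine $J$ on the ambient spaces — this is consistent precisely because \eqref{4.6}/\eqref{3.11} express $\sigma_i$ in terms of $P$, $\beta_i$, and $J$, and \eqref{5.1} together with $P_1 = P_2$ forces $\beta_1 = \beta_2$ (contract \eqref{5.1} appropriately using \eqref{3.10}). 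Then $\tau$ carries $\sigma_1$ to $\sigma_2$ by construction, and parallelism of $\tau$ follows by differentiating the defining relation $\tau F_1 = F_2$ and using \eqref{2.16} for both immersions, noting all the connection and shape-operator data now agree. Finally, the fundamental existence-and-uniqueness theorem for submanifolds of space forms (the same Theorem 1 of \cite{ET} invoked in the existence proof, in its uniqueness form) yields the ambient isometry $\phi$ with $x^1 = \phi \circ x^2$.

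\textbf{Main obstacle.} The delicate point is establishing $P_1 = P_2$ in case (iii): the argument must extract the factor of $c$ cleanly, which requires carefully combining \eqref{5.1} with the ambient curvature identity \eqref{2.3} and the Codazzi equation rather than a naive pointwise manipulation, and one must also handle points where $\theta$ degenerates to $\pi/2$ (isolated totally real points inside a generically proper-slant manifold) so that the $\csc\theta$ factors appearing throughout Section 3 remain controlled — a continuity/closedness argument on the open set $\{\cos\theta \neq 0\}$, together with the $c \neq 0$ leverage on its complement, is what ties the two regimes together.
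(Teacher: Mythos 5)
Your overall framework — establish $\beta_1=\beta_2$ and $P_1=P_2$, build a parallel normal-bundle isometry carrying $\sigma_1$ to $\sigma_2$, and invoke the fundamental uniqueness theorem — matches the paper, and your treatment of cases (i) and (ii) (parallelism of $P_1-P_2$ via \eqref{4.3} once $\beta_1=\beta_2$, propagated by connectedness) is essentially the paper's. However, your plan for case (iii) has a genuine gap. The Gauss equation \eqref{4.4} is \emph{quadratic} in $P$: comparing the two immersions only yields $\left(\langle P_1X,Y\rangle\right)^2=\left(\langle P_2X,Y\rangle\right)^2$, and after a pointwise linear-algebra argument plus connectedness this gives $P_1=P_2$ or $P_1=-P_2$ globally — never more. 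No trace or contraction of \eqref{5.1} together with \eqref{2.3} will "isolate $\langle P_1X,Y\rangle-\langle P_2X,Y\rangle$ multiplied by a nonzero multiple of $c$", because every place $P$ enters the structure equations it enters quadratically or through $\beta$ (which is already known to agree); the sign ambiguity is precisely the difficulty, and your proposal contains no mechanism to resolve it.

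The paper excludes $P_1=-P_2$ by a separate, substantial argument that your proposal is missing: writing \eqref{4.3} for both immersions with $P_1=-P_2$ and $\beta_1=\beta_2=\beta$ forces the total symmetry $\langle\beta(X,Y),Z\rangle=\langle\beta(X,Z),Y\rangle$; subtracting the two Codazzi equations \eqref{4.5} then yields the purely algebraic identity \eqref{5.4}. One then runs a Chen--Vrancken-type maximum argument: maximize $f(u)=\langle\beta(u,u),u\rangle$ over the unit tangent sphere at a point, obtain an adapted basis with $\beta(e_1,e_i)=\mu_ie_i$ and $\mu_i\le\tfrac12\mu_1$, show $Pe_1$ is an eigenvector of $\beta(e_1,\cdot\,)$, and evaluate \eqref{5.4} on suitable arguments to get both $\mu_2^2+\mu_1\mu_2+\tfrac{3c}{4}\sin^4\theta=0$ and $-\mu_2^2-\mu_1\mu_2+\tfrac{3c}{4}\sin^4\theta=0$, whence $c\sin^4\theta=0$ — contradicting $c\ne0$ and $\theta>0$. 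This is the essential missing idea. (A minor point: your concern about $\theta\to\pi/2$ is unfounded, since $\csc(\pi/2)=1$; the only genuinely degenerate points are complex points, already excluded by $0<\theta$.)
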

\begin{proof} Let us choose any point $p\in M$ with assuming that $x^1(p)= x^2(p)$ and $x^1_*(p)= x^2_*(p)$. Then we take a geodesic $\psi$ through the point $p= \psi(0)$. Define $\psi_1=x^1(\psi)$ and $\psi_2=x^2(\psi)$. So, it is sufficient to prove that $\psi_1= \psi_2$ to prove the theorem. First, we know that $\psi_1(0)= \psi_2(0)$ and $ \psi'_1(0)= \psi'_2(0)$. We fix $\{e_1, e_2,...e_n\}$ be an orthonormal frame along $\psi$. Now, we define a frame along $\psi_1$ and $\psi_2$ as: 
$$a_i= x^1_*(e_i),\;\;~b_i= x^2_*(e_i), \;\;a_{n+i}=(x^1_*(e_i))^*,\;\;b_{n+i}=(x^2_*(e_i))^*,$$
such that $X^*$ defined by  \eqref{3.5}, for $i=1,2,\cdots n$.\\
From \eqref{4.9}, we can write 
\begin{align*}
\sigma_i= - (\csc\theta) (\beta_i)^*, 
\end{align*}
for any $i=1,2$. Using \eqref{4.9} and \eqref{4.1}, we obtain
\begin{align*}
\<(\beta_1)^*(X,Y),Fx^1_*Z\>=\<(\beta_2)^*(X,Y),Fx^2_*Z\>
\end{align*}
Then, by \eqref{3.5} the above equation takes the form 
\begin{align*}
\<\beta_1(X,Y),x^1_*Z\> = \<\beta_2(X,Y),x^2_*Z \>.
\end{align*}
As $Z$ be arbitrary vector field and $x^1_*(p)= x^2_*(p)$, we get $\beta_1=\beta_2$.

Now, we want to prove that $P_1 = P_2 $. If (i) is satisfied, then we have $P_1 = P_2 =0$, while if (ii) is satisfied, then from \eqref{4.3} we have $(\tilde\nabla_X(P_1-P_2))Y=0$. Since at any point $p\in M$ we already have $P_1 = P_2 $. So $P_1 = P_2 $ everywhere.

For the remaining situation, suppose that $c\neq 0$, $P_1\neq P_2 $ with (i) and (ii) are not satisfied. In the beginning we will show that $P_1 = -P_2 $. For this, using \eqref{4.4}, we find 
\begin{align*}
&\<P_1X,W\>\<P_1Y,Z\>-\<P_1X,Z\>\<P_1Y,W\>+2\<X,P_1Y\>\<P_1Z,W\>\\
&\hskip.2in = \<P_2X,W\>\<P_2Y,Z\>-\<P_2X,Z\>\<P_2Y,W\>+2\<X,P_2Y\>\<P_2Z,W\>.
\end{align*}
If we replace $W$ by $X$ and $Z$ by $Y$ with using \eqref{4.2}, the above equation can be written as 
\begin{align}\label{5.2}
\left(\<P_1X,Y\>\right)^2=\left(\<P_2X,Y\>\right)^2.
\end{align}
Then, we fix $e_1=X$, $e_2=P_1X$ and $e_3=Y$ with assuming that the component of $P_2e_1$ is in the same direction of a vector $e_3$ which is orthogonal to $e_1$ and $e_2$. Hence, equation \eqref{5.2} becomes
\begin{align*}
\left(\<P_2e_1,e_3\>\right)^2=\left(\<P_1e_1,e_3\>\right)^2= \left(\<e_2,e_3\>\right)^2=0,
\end{align*}
which is a contradiction. Thus, by \eqref{4.1} and \eqref{4.2}, we have $P_1u =\pm P_2 u$, for any $u \in T_pM$. Now, we let a basis $\{e_1, e_2,...,e_n \}$ of the tangent space at $p\in M$. Then there is a number $c_i\in \{-1,1\}$ such that $P_1e_i= c_i P_2e_i $. Therefore, we have 
\begin{align*}
 P_2 (e_i + e_j)= \pm P_1 (e_i + e_j) = c_i P_1e_i + c_jP_1e_j
\end{align*}
So, we conclude that all values of $c_i $ have to be equal. Hence, either $P_1u = P_2 u$ or $P_1u = -P_2 u$ for any $u \in T_pM$. As $M$ is connected, it follows that in situation (iii) either $P_1= P_2 $ or $P_1=-P_2$. 

If we assume that now we have two immersions such that $P_1 = - P_2 $. Then, we can write  \eqref{4.3} as 
\begin{align*}
\<(\tilde\nabla_XP_1)Y,Z\>=\<\beta_1(X,Y),Z\> -\<\beta_1(X,Z),Y\>.
\end{align*}
Similarly, we can obtain
\begin{align*}
\<(\tilde\nabla_XP_2)Y,Z\>=\<\beta_2(X,Y),Z\>-\<\beta_2(X,Z),Y\>.
\end{align*}
But $\beta_1= \beta_2=\beta$, therefore we deduce that 
\begin{align}\label{5.3}
\<\beta(X,Y), Z\>=\<\beta(X,Z),Y\>.
\end{align}
For both immersions, we rewrite the equation \eqref{4.5} as follows 
\begin{align*}
&\hskip-.3in \big\{(\tilde\nabla_X\beta_1)(Y,Z)- (\tilde\nabla_Y\beta_1)(X,Z)\big\}= (\csc^2\theta) \big\{P_1\beta_1(Y,\beta_1(X,Z))\\&+\beta_1(Y,P_1\beta_1(X,Z))-P_1\beta_1(X,\beta_1(Y,Z))-\beta_1(X,P_1\beta_1 (Y,Z))\big\} \\&+\frac{c}{4}(\sin^2\theta)\big\{\<Y,P_1Z\>X
-\<X,P_1Z\>Y-2\<X,P_1Y\>Z\big\}\\&+2(\cot\theta)\big\{ (X\theta)\beta_1(X,Z) - (Y\theta)\beta_1(Y,Z)\big\}.
\end{align*}
Similarly, we get
\begin{align*}
&\hskip-.3in \big\{(\tilde\nabla_X\beta_2)(Y,Z)- (\tilde\nabla_Y\beta_2)(X,Z)\big\}= (\csc^2\theta) \big\{P_2\beta_2(Y,\beta_2(X,Z))\\&+\beta_2(Y,P_2\beta_2(X,Z))-P_2\beta_2(X,\beta_2(Y,Z))-\beta_2(X,P_2\beta_2(Y,Z))\big\}\\& +\frac{c}{4}(\sin^2\theta)\big\{\<Y,P_2Z\>X
-\<X,P_2Z\>Y-2\<X,P_2Y\>Z\big\} \\&+2(\cot\theta)\big\{ (X\theta)\beta_2(X,Z)-(Y\theta)\beta_2(Y,Z)\big\}
\end{align*}
Putting $P_1=-P_2=P$ and $ \beta_1= \beta_2 = \beta$ in the above two equations, and then subtracting them, we obtain 
\begin{align*}
&P\beta(X,\beta(Y,Z))+ \beta(X,P\beta (Y,Z))-P\beta(Y,\beta (X,Z))- \beta(Y,P\beta (X,Z))\\
&\hskip.5in+\frac{c}{4}(\sin^4\theta)\big\{\<X,PZ\>Y- \<Y,PZ\>X+2  \<X,PY\>Z\big\}=0.
\end{align*}
Taking the inner product of the above equation with $W$ for any $W\in TM$ and applying \eqref{5.3}, we get
\begin{align}\notag
&\<\beta (X,Z),\beta(Y,PW)\>-\<\beta(X,PW),\beta(Y,Z)\>+ \<\beta(X,W),P\beta (Y,Z)\>\\
&\label{5.4}\hskip.2in  -\<\beta(Y,W),T\beta (X,Z)\>+\frac{c}{4}(\sin^4\theta)\big\{\<X,PZ\>\<Y,W\>- \<Y,PZ\>\<X,W\>\\ \notag
&\hspace{2cm}+2\<X,PY\>\<Z,W\>\big\}=0.
\end{align}
In the pervious equation, if $\beta=0$ at any point $p\in M$ , then we get a contradiction because $c\neq 0$.
So, we now choose a fixed point $p\in M$ and define a function $f$ on $UM_p$ by 
\begin{align*}
f(u)= \<\beta(u,u), u\>, 
\end{align*}
for each $u\in UM_p$, where $UM_p$ be the set of all unit tangent vectors. It is known that $UM_p$ is compact. So, there exists a vector $v $ such that $f $ arrives an absolute maximum at $v$. Suppose that $w$ be a unit vector orthogonal to $v$. Then the function $f(t) =f(g(t))$, such that $g(t) = (\cos t)v+ (\sin t)w$, satisfies the following
\begin{enumerate}
\item[(i)] $f^\prime(0) = 0$, which gives that $\<\beta(v, v),w\> = 0$.
\item[(ii)]$f^{\prime\prime}(0) \leq 0$, which implies that $\<\beta(v,w),w\> \leq \frac {1}{2}\<\beta(v, v), v\>$. 
\end{enumerate}
Now, by the total symmetry of $\beta$, we can fix an orthonormal basis $\{e_1= u, e_2,..., e_n\}$ which satisfies 
\begin{align}\label{5.5}
\beta(e_1,e_1) =\mu_1e_1,~~~~\beta(e_1,e_i) =\mu_ie_i , 
\end{align}
for $i >1$ and $\mu_i \leq \frac{1}{2}\mu_1$. Using the total symmetry of  \eqref{5.3} and $\beta \neq 0$, we obtain $\mu_1>0$. Substituting \eqref{5.3} and \eqref{5.5} into \eqref{5.4} with $X = Z = W = e_1$ and $Y = e_i$, we obtain 
\begin{align*}
\<\beta(e_i, Pe_1), \mu_1e_1\>-\<\beta(e_1,Pe_1), \mu_ie_i \>+\<\mu_1e_1,P\mu_ie_i\>- \<\mu_ie_i, P\mu_1e_1\>\\
+\frac{c}{4}(\sin^4\theta)\{-\<e_i,Pe_1\>\<e_1,e_1\>+2\<e_1,Pe_i\>\<e_1,e_1\>\}=0,
\end{align*}
which implies that 
\begin{align}\label{5.6}
\left(\mu_i^2 + \mu_1\mu_i+\frac{3c}{4}\sin^4\theta\right)\<e_i, Pe_1\>=0.
\end{align}
Now, we need to show that $Pe_1$ be an eigenvector of $\beta(e_1,\,\cdot \,)$. For this, we put $X = Z  = e_1$, $Y = e_i$ and $W= e_j$ in \eqref{5.4} such that $i , j > 1$ with using \eqref{5.3} and \eqref{5.5} to obtain 
\begin{align*}
 \mu_1\<\beta(e_1,e_i),Pe_j\>-\mu_i\<\beta(e_1,e_i),Pe_j\>+ \mu_i\<\beta(e_1,e_j),Pe_i\>- \mu_1\<\beta(e_i,e_j),Pe_1\>=0,
\end{align*}
or
 \begin{align}\label{5.7}
(\mu_i^2 - \mu_1\mu_i+ \mu_i\mu_j )\<e_i,Pe_j\>+ \mu_1\<\beta(e_i,e_j),Pe_1\>= 0.
\end{align}
Replacing the indices $i$ and $j$ in the above equation, we deduce that 
\begin{align}\label{5.8}
(\mu_j^2 - \mu_1\mu_j+ \mu_i\mu_j )\<e_i, Pe_j\>- \mu_1\<\beta(e_i,e_j),Pe_1\>= 0.
\end{align} 
Adding \eqref{5.7} and \eqref{5.8}, we find that
\begin{align*}
(\mu_i^2 - \mu_1\mu_i+\mu_j^2 - \mu_1\mu_j+ 2 \mu_i\mu_j )\<e_i, Pe_j\>= 0, 
\end{align*}
which gives that 
\begin{align}\label{5.9}
(\mu_i+ \mu_j) (\mu_1- \mu_i-\mu_j)\<e_i,Pe_j\>=0, 
\end{align} 
But we have $ \mu_i \leq \frac {1}{2}\mu_1$. So, $\mu_1- \mu_i-\mu_j=0 $ only if $\mu_i= \mu_j= \frac {1}{2}\mu_1$.

Now, by taking $X = W = e_1$, $Y = e_i$ and $Z= e_j$   such that $i, j > 1$ in \eqref{5.4}, we get 
\begin{equation}\begin{aligned}\label{5.10}
\<\beta(e_1,Pe_1),\beta(e_i,e_j) \>&- \mu_j\<\beta(e_i,e_j), Pe_1\> +\mu_i\mu_j\<e_i, Pe_j\>\\&\hskip-.3in 
+\mu_1\<\beta(e_i,e_j), Pe_1\>+\frac{c}{4}(\sin^4\theta)\<e_i, Pe_j\>= 0.
\end{aligned}\end{equation} 
Replacing the indices $i$ and $j$ in the above equation, we obtain
\begin{equation}\begin{aligned}\label{5.11}
\<\beta(e_1, Pe_1),\beta(e_i, e_j)\>& - \mu_i\<\beta(e_i,e_j), Pe_1\> +\mu_i\mu_j\<Pe_i,e_j\>\\&
\hskip-.3in +\mu_1\<\beta(e_i,e_j), Pe_1\>+\frac{c}{4}(\sin^4\theta)\<Pe_i,e_j\>= 0.
\end{aligned}\end{equation}
Subtracting \eqref{5.10} from \eqref{5.11}, we derive 
\begin{align}\label{5.12}
(\mu_i-\mu_j)\<\beta(e_i,e_j), Pe_1\>+2\mu_i\mu_j\<e_i, Pe_j\>+\frac{c}{2}(\sin^4\theta)\<e_i, Pe_j\>=0.
\end{align}
Now, we need to brief the preceding equations in the following method. First, interchanging $j$ by $i$ in \eqref{5.7} with using $\<Pe_i, e_i\> = 0$, we obtain 
\begin{align}\label{5.13}
\<\beta(e_i,e_i),Pe_1\>= 0 .
\end{align} 
Thus,  we have $\<\beta(\nu,\nu), Pe_1\>= 0$ if $ \nu$ is an eigenvector of $\beta(e_1, \cdot\, \,)$.  In addition, the symmetry of $\beta$ give us that $\<\beta(e_i,e_j),Pe_1\>=0$, whenever $\mu_i= \mu_j $.
So that, we can consider the following four different cases:
\begin{enumerate}
\item[(a)] $\mu_i+\mu_j \neq 0$, but not $ \mu_i = \mu_j =\frac {1}{2}\mu_1$. Thus, \eqref{4.9} follows $\<Pe_i,e_j\>= 0$;
\item[(b)]  $\mu_i+\mu_j =0 $ and $\mu_i \neq 0$. So, \eqref{5.7} gives that $\<\beta(e_i,e_j), Pe_1\> = \mu_i\<e_i, Pe_j\>$, if we apply this in \eqref{5.12}, we get $\<e_i, Pe_j\>= 0$;
\item[(c)] $\mu_i = \mu_j =0 $. Thus, by \eqref{5.12}, we obtain $\<Pe_i, e_j\>= 0$;
\item[(d)] $ \mu_i = \mu_j =\frac {1}{2}\mu_1$
\end{enumerate}
Hence, if we let $e_{i_1}, ..., _{i_r}$ are eigenvectors in an eigenvalue which is different from $\frac {1}{2}\mu_1$, then 
each $Pe_{i_s} , s=1,... , r$, can just have a component in the same direction of $e_1$, such as $Pe_{i_s}= \mu_{s}e_1$. Therefore, $\mu_{s}Pe_1= -( \cos ^2\theta) e_{i_s}$. 
 Accordingly, either $r = 1$ or there does not exit an eigenvector with eigenvalue different from $\frac {1}{2}\mu_1$. If $r = 1$, then certainly $Pe_1$  is an eigenvector.
In the other case $\beta(e_1, \cdot \,)$ limited to the space $e_1^\perp$, only has one eigenvalue, that $\frac {1}{2}\mu_1$.
As  $Pe_1$ is orthogonal to $e_1$ forever, then $Pe_1$ is also an eigenvector in this case. Thus, $Pe_1$ is always an eigenvector of  $\beta(e_1, \cdot \,)$.

Now, we can consider that $e_2$ is in the same direction of $Pe_1$. So, we get directly that $\beta(e_1, Pe_1)= \mu_2Pe_1$, such that from \eqref{4.6}, $\mu_2$ satisfies the equation
\begin{align}\label{5.14}
\mu_2^2 + \mu_1\mu_2+\frac{3c}{4}\sin^4\theta = 0. 
\end{align} 
If we put  $X = Z = e_1$, $Y = e_i$ and $W = Pe_1$, for $i > 2$ in \eqref{5.4},  we find 
\begin{align*}
\mu_i\<\beta(e_1, Pe_1),Pe_i\>- \mu_1\<\beta(e_i, Pe_1), Pe_1\>=0, 
\end{align*}
which gives 
\begin{align*}
\mu_i\mu_2(\cos ^2\theta)\<e_1,e_i\>- \mu_1\<\beta(e_i, Pe_1),Pe_1\>=0. 
\end{align*}
Hence, 
\begin{align*}
\beta(Pe_1,Pe_1)=\mu_2(\cos ^2\theta) e_1
\end{align*}
Taking $X = Z = W = Pe_1$, and $Y = e_1$ in \eqref{5.4} again, we derive
\begin{align}\label{5.15}
-\mu_2^2 - \mu_1\mu_2+\frac{3c}{4}\sin^4\theta = 0 .
\end{align} 
Finally, \eqref{5.14} and \eqref{5.15} implies that $c\sin^4\theta = 0$, which is a contradiction because $c \neq 0$ and $\theta$ be a real-valued function. Thus, $P_1=P_2$.
\end{proof}

\vskip.1in
\noindent {\bf Acknowledgement.} \rm{This project was funded by the Deanship of Scientific Research (DSR) at King Abdulaziz University, Jeddah, under grant no. (KEP-PhD-41-130-38). The authors, therefore, acknowledge with thanks to DSR for technical and financial support.}

\end{document}